\newcommand{\amsprimary}[1]{{\footnotesize\noindent AMS 2020 \textit{Mathematics subject
classification:} Primary #1\vspace{1pc}}}
\newcommand{\keywordsnames}[1]{{\footnotesize\noindent\textit{Key words:} #1\vspace{1pc}}}
\newtheorem{theorem}{Theorem}
\newtheorem{teo}{Theorem}
\newtheorem{corollary}[teo]{Corollary}
\newtheorem{lemma}[teo]{Lemma}
\theoremstyle{definition}
\theoremstyle{remark}
\title[]{On Liouville's theorem and the Strong Liouville Property}
\author{John E. Bravo and Jean C. Cortissoz }
\email{jcortiss@uniandes.edu.co, j.bravob@uniandes.edu.co}
\address{Department of Mathematics, Universidad de los Andes, Bogot\'a DC, Colombia}
\date{}
\begin{document}

%In this paper we explore Liouville's theorem on Riemannian cones as defined below.
%We also study the Strong Liouville Property, that is, the property of a
% cone having spaces of harmonic functions of a fixed
%polynomial growth of finite dimension.
%We explore Liouville’s theorem and the Strong Liouville Property (SLP) for harmonic functions on Riemannian cones and surfaces. Our approach reframes the classical Liouville property in terms of the growth of radial eigenfunctions 
%(in the case of manifolds with
%rotational symmetry), allowing us to recover and sharpen known results under minimal assumptions. We provide explicit estimates for the slowest-growing non-constant harmonic functions on cones and surfaces, and construct examples where doubling fails but Liouville and SLP still hold. Finally, we extend the analysis to 
%$p$-subharmonic functions, proving a nonlinear Liouville theorem %under curvature conditions resembling Milnor's celebrated criterion
%for harmonic functions.
\begin{abstract}
We explore Liouville’s theorem and the Strong Liouville Property (SLP) for harmonic functions on Riemannian cones and surfaces. Our approach recasts the classical Liouville property in terms of the growth of radial eigenfunctions (in the case of manifolds with rotational symmetry), allowing us to recover and sharpen known results under minimal assumptions. We provide explicit estimates for the slowest-growing nonconstant harmonic functions on cones and surfaces, and construct examples where doubling fails but Liouville and SLP still hold. Finally, we prove
a nonlinear Liouville theorem for $p$-subharmonic functions,
$p\geq 2$, under curvature bounds,
in complete Riemannian
surfaces with a pole which simultaneously recover Milnor’s and Cheng–Yau’s theorems as particular cases. This result appears to be new and suggests a unified geometric perspective on linear and nonlinear Liouville phenomena.
\end{abstract}

\maketitle

\keywordsnames{Liouville's Theorem; bounded harmonic functions; Strong Liouville Property; $p$-Laplacian; radial eigenfunction; curvature estimates}

{\amsprimary {31C05, 53C21}}

\tableofcontents

\section{Introduction}

One of the most celebrated results in Complex Analysis is Liouville's theorem. It states that
any bounded entire function $f:\mathbb{C}\longrightarrow \mathbb{C}$ must be constant. 
Liouville's theorem has spawned a lot of research along the past two centuries, and different 
versions have been developed in geometric contexts and have inspired important
and beautiful research in Geometric Analysis.

\medskip
In this paper we study Liouville's theorem and the Strong Liouville Property
(to be
defined below) in a geometric context, and to be more precise, on Riemannian cones. Our goal is to test the limits of 
some known, and one might say, celebrated results on the behavior of
harmonic functions on Riemannian manifolds, such as Cheng-Yau Liouville's theorem
and Ni-Tam results on surfaces of finite total curvature, and to extend the study 
of this result to nonlinear extensions of the Laplacian, e.g., to the
$p$-Laplacian on surfaces with a pole.

\medskip
To begin with our discussion, we give a few definitions.
First of all,
given a closed $\left(n-1\right)$-dimensional Riemannian manifold $\left(N,g_{\omega}\right)$, we define the topological cone
as the (topological) quotient
\[
M=\left(\left[0,\infty\right)\times N\right)/\left(\left\{0\right\}\times N\right).
\]
The equivalence class of $\left\{0\right\}\times N$ is called the vertex of the cone.

\medskip
Let $g$ be a metric defined on the cone $M$ by
\[
g=dr^2+\phi\left(r\right)^2g_{\omega},
\]
where $\phi$ is a smooth function such that $\phi\left(0\right)=0$ and $\phi'\left(0\right)=1$ and
suitable behavior at $r=0$ (for regularity issues). From now on, we shall call $\left(M,g\right)$ a 
\emph{Riemannian cone}.

\medskip
For $r>0$, the Laplace operator in the Riemannian cone is defined as
\[
\Delta=\frac{\partial^2}{\partial r^2}+\left(n-1\right)\frac{\phi'}{\phi}\frac{\partial}{\partial r}+\frac{1}{\phi^2}\Delta_{\omega},
\]
where $\Delta_{\omega}$ is the Laplacian on $N$.
We say that $u\in C\left(M\right)\cap C^2\left(\left(0,\infty\right)\times N\right)$
is harmonic if $\Delta u=0$ on $\left(0,\infty\right)\times N$.

\medskip
Liouville's theorem on $\mathbb{R}^n$ (which corresponds to the
case $\phi\left(r\right)=r$ and $N=\mathbb{S}^{n-1}$ with
the round metric of curvature 1) asserts that a bounded harmonic function $u:\mathbb{R}^n\longrightarrow \mathbb{R}$
is constant. So, we say that a Riemannian cone satisfies Liouville's theorem
if any bounded harmonic function must be constant. This, naturally, raises the question
of what is the minimal growth a non-constant harmonic function must have; for instance, in $\mathbb{R}^n$
with the Euclidean metric the answer is ``at least linear".

\medskip
Besides Liouville's theorem, as we said above, we want to discuss a property called Strong Liouville Property
(abbreviated SLP). 
On a Riemannian cone, we define, for $d$ a positive integer, the spaces 
\[
\mathcal{H}_d\colon = \left\{u:M\longrightarrow \mathbb{R} \,\,
\mbox{harmonic}:\, 
\sup_{\left|x\right|}\frac{\left|u\left(x\right)\right|}{\left(1+\left|x\right|\right)^d}<\infty \right\},
\]
where we denote by $\left|x\right|$ the distance to the vertex of the cone.
A Riemannian cone is said
to satisfy the Strong Liouville Property if $\mbox{dim}\left(\mathcal{H}_d\right)<\infty$
for all $d$.

\medskip
Before we state our first result (from which applications follow), we need a little more
preparation. On a cone, we can use separation of variables to find harmonic
functions. Indeed, if 
$f_{m,k}$ is an eigenfunction for the $k$-th eigenvalue of the
Laplacian $\Delta_{\omega}$ of $N$, it is easy to prove that
we can obtain a harmonic function by writing
\[
h_{m,k}\left(r,\omega\right)=\varphi_m\left(r\right)f_{m,k}\left(\omega\right),
\]
where $\varphi_m\in C^2\left(\left(0,\infty\right)\right)\cap C\left(\left[0,\infty\right)\right)$ 
which satisfies
\begin{equation}
\label{eq:radial_equation}
\varphi_m''+\left(n-1\right)\frac{\phi'}{\phi}\varphi_m'-\frac{\lambda_m^2}{\phi^2}\varphi_m=0.
\end{equation}
We are now ready to state:
\begin{theorem}
\label{thm:abstract_liouville}
 Let $u:M\longrightarrow \mathbb{R}$ be a harmonic function, and let
\[
M_R=\left(\left[0,R\right)\times N\right)/\left(\left\{0\right\}\times N\right).
\]
and define
 \[
 u_R= \max_{x\in \partial M_R}\left|u\left(x\right)\right|.
 \]
 Assume there is an $A\left(R\right)\rightarrow \infty$ such that $\varphi_m\left(R\right)\gtrsim A\left(R\right)$,
 $m\geq 1$. If $u_R=o\left(A\left(R\right)\right)$, then $u$ is constant.
\end{theorem}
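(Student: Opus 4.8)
The plan is to expand $u$ in the eigenfunctions of $\Delta_{\omega}$ and reduce everything to the radial equation \eqref{eq:radial_equation}. Fix, for each distinct eigenvalue $\lambda_m^2$ of $-\Delta_{\omega}$ (with $0=\lambda_0<\lambda_1\le\lambda_2\le\cdots$), an $L^2(N,g_{\omega})$-orthonormal basis $\{f_{m,k}\}_k$ of the corresponding eigenspace, so that $\{f_{m,k}\}_{m,k}$ is a complete orthonormal system on $N$ and $f_{0,1}\equiv \mathrm{Vol}(N)^{-1/2}$. By elliptic regularity $u$ is smooth on $(0,\infty)\times N$, so for $r>0$ the coefficients $a_{m,k}(r)=\int_N u(r,\cdot)\,f_{m,k}\,d\mathrm{vol}_{g_{\omega}}$ are $C^2$ in $r$; projecting $\Delta u=0$ onto $f_{m,k}$ and using the self-adjointness of $\Delta_{\omega}$ (so that $\int_N (\Delta_{\omega}u)f_{m,k}=\int_N u(\Delta_{\omega}f_{m,k})=-\lambda_m^2 a_{m,k}$) shows that each $a_{m,k}$ solves \eqref{eq:radial_equation} with $\lambda=\lambda_m$, and $u(r,\omega)=\sum_{m,k}a_{m,k}(r)f_{m,k}(\omega)$.

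Next I would use continuity of $u$ at the vertex to pin down the $a_{m,k}$. Near $r=0$, since $\phi(r)\sim r$, equation \eqref{eq:radial_equation} is a regular-singular perturbation of an Euler equation whose indicial roots are the roots of $\nu^2+(n-2)\nu-\lambda_m^2=0$; for $m\ge 1$ these satisfy $\nu_-<0<\nu_+$, so one solution is comparable to $r^{\nu_+}$ and any linearly independent one blows up like $r^{\nu_-}$ as $r\to 0^+$. Because $u$ is continuous on $M$, $a_{m,k}(r)=\int_N u(r,\cdot)f_{m,k}$ stays bounded as $r\to 0^+$, so $a_{m,k}$ is a scalar multiple of the regular branch; as $\varphi_m$ is (a fixed normalization of) that regular solution, $a_{m,k}=c_{m,k}\varphi_m$ for all $m\ge 1$, with constants $c_{m,k}$.

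Now the main estimate. For any $R>0$ and any $m\ge 1$, using $|u(R,\omega)|\le u_R$ on $\partial M_R=\{R\}\times N$ and Cauchy--Schwarz, $|c_{m,k}|\,\varphi_m(R)=|a_{m,k}(R)|\le u_R\,\|f_{m,k}\|_{L^1(N)}\le u_R\,\mathrm{Vol}(N)^{1/2}$. Invoking the hypothesis $\varphi_m(R)\gtrsim A(R)$ uniformly in $m\ge 1$ gives $|c_{m,k}|\lesssim u_R/A(R)$, and since $u_R=o(A(R))$ the right-hand side tends to $0$ as $R\to\infty$ while the left-hand side is independent of $R$; hence $c_{m,k}=0$ for every $m\ge 1$ and every $k$. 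Therefore $u$ reduces to the zero mode $u=a_{0,1}(r)f_{0,1}=:b(r)$, a function of $r$ alone, and $b$ solves $b''+(n-1)(\phi'/\phi)b'=0$, i.e.\ $(\phi^{n-1}b')'=0$, so $b'(r)=C\phi(r)^{-(n-1)}$. If $C\ne 0$ then $b$ is unbounded as $r\to 0^+$ (since $\phi(r)\sim r$ and $n\ge 2$), contradicting continuity of $u$ at the vertex; hence $C=0$ and $u\equiv b$ is constant.

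The only points that require genuine care are (i) the \emph{uniformity in $m$} of the implied constant in $\varphi_m(R)\gtrsim A(R)$ — this is part of the hypothesis and is precisely what makes the $R\to\infty$ argument close — and (ii) the vertex analysis, namely that continuity of $u$ at the vertex selects the regular branch of \eqref{eq:radial_equation} for each $m\ge 1$ and excludes the singular radial solution in the zero mode $m=0$. The Fourier-analytic bookkeeping (smoothness of the coefficients, interchange of $\int_N$ with $\partial_r$, completeness of $\{f_{m,k}\}$) is routine once elliptic regularity on the compact manifold $N$ is invoked, and I would treat it briefly.
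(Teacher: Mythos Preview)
Your argument is correct and follows essentially the same skeleton as the paper's: expand $u$ in eigenfunctions of $\Delta_{\omega}$, identify each radial coefficient as a multiple of the regular solution $\varphi_m$, and then use $\varphi_m(R)\gtrsim A(R)$ together with $u_R=o(A(R))$ to kill all modes $m\ge 1$.

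The differences are in the implementation, and they are worth noting. The paper obtains the expansion $u=\sum_{m,k}c_{m,k}\,\varphi_m(r)f_{m,k}(\omega)$ by solving the Dirichlet problem on each $M_R$ (citing \cite{Co2} for well-posedness and for the fact that the $\varphi_m$ are nonnegative and nondecreasing) and then matching coefficients at different radii; it then finishes with Plancherel on $\partial M_R$, i.e.\ an $L^2$ identity for $\sum|c_{m,k}|^2\varphi_m(R)^2$. You instead project $\Delta u=0$ directly onto each $f_{m,k}$ to get the radial ODE for $a_{m,k}$, use the indicial analysis at $r=0$ to select the regular branch, and estimate each coefficient individually via $|c_{m,k}|\,\varphi_m(R)=|a_{m,k}(R)|\le u_R\,\mathrm{Vol}(N)^{1/2}$. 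Your route is more self-contained (it avoids invoking the Dirichlet-problem machinery from \cite{Co2}) and the coefficient-by-coefficient estimate is slightly more elementary than Plancherel; on the other hand, the paper's $L^2$ identity is reused verbatim in Section~\ref{sect:slp} for the SLP, so it buys something later. Your explicit treatment of the zero mode $m=0$ (solving $(\phi^{n-1}b')'=0$ and excluding the singular branch via continuity at the vertex) is a point the paper leaves implicit.
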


As a consequence of this result, we obtain the following.
\begin{corollary}
\label{corollary:boundedphi}
 We are under the assumptions of Theorem \ref{thm:abstract_liouville}. 
 If $\phi'\left(r\right)\leq \beta$,
 then Liouville's theorem holds on $M$. Furthermore, an explicit estimate on the growth of the slowest growing non-constant harmonic function on $M$ can be given in terms of $\beta$ and of the smallest eigenvalue of the Laplacian on $N$. In fact, in the 
 case when $N=\mathbb{S}^{n-1}$ with the round metric, 
 if
 \[
 n\geq 2\left(1+\frac{1}{\beta^2}\right)+2\sqrt{\left(1+\frac{1}{\beta^2}\right)^2-\left(1+\frac{1}{\beta^2}\right)},
 \]
 we have that
 the minimal required growth of a non-constant harmonic function on $M$ must be
 at least greater than
 $
 r^{\frac{n-1}{\sqrt{2}\beta^2 \left(n-2\right)}}
 $ (this estimate is not sharp).
\end{corollary}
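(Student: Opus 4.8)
The plan is to derive everything from Theorem~\ref{thm:abstract_liouville}: it suffices to exhibit a function $A(R)\to\infty$ with $\varphi_m(R)\gtrsim A(R)$ for every $m\ge1$, and, for the explicit statement, to pin $A$ down quantitatively. Two preliminary remarks. First, a radial ($m=0$) harmonic function on $M$ satisfies $(\phi^{n-1}\varphi_0')'=0$, so $\varphi_0'=c\,\phi^{1-n}$; since $\phi(r)\sim r$ at the vertex and $\int_0^1\phi^{1-n}=\infty$, continuity there forces $c=0$, so $\varphi_0$ is constant and only $m\ge1$ matters. Second, I shall bound each $\varphi_m$ from below by a power $R^{\alpha_m}$, where $\alpha_m>0$ is the positive root of $\alpha(\alpha+n-2)=\lambda_m^2/\beta^2$; since $\lambda_m\ge\lambda_1$ one has $\alpha_m\ge\alpha_1$, so $A(R)=R^{\alpha_1}$ works simultaneously for all $m$, with $\alpha_1$ determined by $\beta$ and the smallest positive eigenvalue $\lambda_1^2$ of $\Delta_\omega$.

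The starting point is the Sturm--Liouville form of \eqref{eq:radial_equation},
\[
(\phi^{n-1}\varphi_m')'=\lambda_m^2\,\phi^{n-3}\varphi_m ,
\]
whose right-hand side is nonnegative; together with the vertex asymptotics $\varphi_m(r)\sim r^{\gamma_m}$, $\gamma_m>0$ the root of $\gamma(\gamma+n-2)=\lambda_m^2$, this gives $\varphi_m>0$, $\varphi_m'>0$, $\phi^{n-1}\varphi_m'$ nondecreasing on $(0,\infty)$, and the inequality
\[
\varphi_m(R)\ \gtrsim\ \int_1^R\frac{1}{\phi(r)^{n-1}}\Big(\int_1^r\phi(s)^{n-3}\varphi_m(s)\,ds\Big)dr .
\]
I would then feed in $\phi'\le\beta$, which integrates to $\phi(r)\le\beta r$. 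For the qualitative statement, inserting the crude bound $\varphi_m\ge\varphi_m(1)$ in the inner integral and using a Cauchy--Schwarz estimate (which exploits $\phi\le\beta r$, and reduces in the end to the divergence of $\int_1^\infty\phi^{-1}$) one checks that the right-hand side tends to $\infty$; hence $\varphi_m(R)\to\infty$ and Liouville's theorem holds on $M$. For the sharp rate one iterates the displayed inequality, substituting the previous lower bound for $\varphi_m$ into the inner integral: for the extremal profile $\phi\equiv\beta r$ the iteration converges exactly to the power solution $r^{\alpha_m}$ of the Euler equation $(r^{n-1}\varphi')'=(\lambda_m^2/\beta^2)\,r^{n-3}\varphi$, and a comparison argument then transfers $\varphi_m(R)\gtrsim R^{\alpha_m}$ to every admissible $\phi$. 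Applying Theorem~\ref{thm:abstract_liouville} with $A(R)=R^{\alpha_1}$ yields the general explicit estimate.

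For $N=\mathbb S^{n-1}$ with the round metric one has $\lambda_1^2=n-1$, so it remains to bound $\alpha_1$ from below. Put $P=(n-2)^2$ and $Q=4(n-1)/\beta^2$, so $\alpha_1=\tfrac12\big(\sqrt{P+Q}-\sqrt P\big)$. From $\sqrt{P+Q}-\sqrt P=Q/(\sqrt{P+Q}+\sqrt P)\ge Q/(2\sqrt{P+Q})$ one gets $\alpha_1\ge Q/(4\sqrt{P+Q})$, and when $P\ge Q$ — that is, $\beta^2(n-2)^2\ge4(n-1)$, which a short computation shows is equivalent to the stated hypothesis $n\ge 2(1+1/\beta^2)+2\sqrt{(1+1/\beta^2)^2-(1+1/\beta^2)}$ — one has $\sqrt{P+Q}\le\sqrt{2P}=\sqrt2\,(n-2)$, hence $\alpha_1\ge Q/(4\sqrt2\,(n-2))=\frac{n-1}{\sqrt2\,\beta^2(n-2)}$. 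By Theorem~\ref{thm:abstract_liouville}, any nonconstant harmonic function on $M$ must therefore grow faster than $R^{(n-1)/(\sqrt2\,\beta^2(n-2))}$; the factor $\sqrt2$, and hence the non-sharpness, is exactly the slack in $\sqrt{P+Q}\le\sqrt{2P}$.

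The step I expect to be the main obstacle is the comparison $\varphi_m(R)\gtrsim R^{\alpha_m}$ for a general admissible $\phi\le\beta r$. The drift term $(n-1)\phi'/\phi$ in \eqref{eq:radial_equation} is not controlled pointwise by $\beta$ — only $\phi'\le\beta$ is available, while $\phi$ itself may be small — so $r^{\alpha_m}$ need not be a subsolution, and a naive maximum-principle comparison on an annulus $M_R\setminus M_{r_0}$ does not close. The mechanism that should rescue the estimate is that a large drift forces the potential $\lambda_m^2/\phi^2$ to be correspondingly large; to make this precise I would work within the Sturm--Liouville form (where the drift has been absorbed into the weight $\phi^{n-1}$) and prove a monotonicity in $\phi$ of the double integral above. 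For $n=2,3$ the weight $\phi^{n-3}$ has the favourable sign and this is routine; for $n\ge4$ the inner weight $\phi^{n-3}$ requires more care — and, since the threshold on $n$ in the $\mathbb S^{n-1}$ estimate already forces $n\ge4$, this is the genuinely delicate point.
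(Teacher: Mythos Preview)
Your target bound $\varphi_m(R)\gtrsim R^{\alpha_m}$, with $\alpha_m$ the positive root of $\alpha(\alpha+n-2)=\lambda_m^2/\beta^2$, is exactly right, and your reduction of $\alpha_1$ to $\tfrac{n-1}{\sqrt2\,\beta^2(n-2)}$ (with the hypothesis on $n$ unwinding to $\beta^2(n-2)^2\ge 4(n-1)$) matches the paper's endpoint. But the step you yourself flag as ``the main obstacle'' --- transferring $\varphi_m\gtrsim R^{\alpha_m}$ from the extremal profile $\phi=\beta r$ to a general $\phi$ with $\phi'\le\beta$ --- is a genuine gap, and the iteration/monotonicity route you sketch does not close it. For $n\ge4$ the outer weight $\phi^{1-n}$ in your double integral is decreasing in $\phi$ while the inner weight $\phi^{n-3}$ is increasing, so there is no monotonicity of the operator in $\phi$; and since the explicit $\mathbb S^{n-1}$ statement already forces $n\ge4$, the case where your argument is routine is exactly the case that is not needed. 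Your qualitative Liouville argument (``insert $\varphi_m\ge\varphi_m(1)$ and use Cauchy--Schwarz'') is also not convincing as written: one iteration of the double integral with $\phi\le\beta r$ and $\phi\ge\phi(1)$ gives only a \emph{convergent} integral for $n\ge3$.

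The paper bypasses the comparison entirely by passing to a Riccati equation. Setting $\varphi_m(r)=\varphi_m(1)\exp\bigl(\int_1^r \lambda_m^2\phi^{1-n}x_m\bigr)$, one finds
\[
x_m'+\frac{\lambda_m^2}{\phi^{n-1}}\,x_m^2=\phi^{n-3},\qquad x_m\ge0.
\]
The key lemma is $x_m(t)\ge\frac{\eta_m}{\lambda_m}\phi(t)^{n-2}$ for all $t$, where $\eta_m\in(0,1)$ is chosen so that $\frac{1-\eta_m^2}{\beta(n-2)}=\frac{\eta_m}{\lambda_m}$. It is proved by a dichotomy--continuity argument: at each instant either the algebraic part of the Riccati already gives $x_m\ge\frac{\eta_m}{\lambda_m}\phi^{n-2}$, or else $x_m'\ge(1-\eta_m^2)\phi^{n-3}$, and integrating the latter against $\phi'\le\beta$ (the \emph{only} place $\beta$ enters) reproduces the bound. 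This yields $\varphi_m'/\varphi_m\ge\lambda_m\eta_m/\phi$, hence $\varphi_m\gtrsim\exp\bigl(\lambda_m\eta_m\int_1^r\phi^{-1}\bigr)$; then $\phi\le\beta r$ gives $\varphi_m\gtrsim r^{\lambda_m\eta_m/\beta}$, and one checks directly that $\lambda_m\eta_m/\beta=\alpha_m$. The Riccati substitution is precisely what converts the sole hypothesis $\phi'\le\beta$ into a usable differential inequality, with no need for any control on $\phi'/\phi$.
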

Notice that the
corollary applies to the case of
an open manifold of nonnegative radial
curvature (which is given by $-\phi''/\phi$)
and of finite total radial curvature. In the case of nonnegative radial curvature
the estimate on $\phi$ is satisfied for $\beta=1$, and then our result is valid for
$n\geq 7$. However, observe that if the condition on $\phi$ is satisfied for 
a given $\beta$, then it is also satisfied for any $\beta'\geq \beta$, 
so by picking an appropriate large enough $\beta'$, it might be shown that our result holds for any $n\geq 3$, 
with estimates on the growth of the slowest growing harmonic function depending now on the new appropriately chosen $\beta'$. 
As special as it is, we shall treat the two-dimensional case on its own.

\medskip
Also, regarding the previous corollary, Saloff-Coste in \cite{Saloff-Coste92}, proved that Liouville's theorem
holds if the manifold satisfies certain geometric-analytic conditions, among them, that the manifold satisfies
a doubling condition, that is for any $R>0$ there is a constant $\kappa$
independent of $R$ such that for any two concentric balls $B_R$ and $B_{2R}$
\[
\frac{\mbox{Vol}\left(B_{2R}\right)}{\mbox{Vol}\left(B_{R}\right)}\leq \kappa.
\]
Using the explicit representation for harmonic functions in a rotationally
symmetric open surface, in Section \ref{subsect:counterexample}, we will show
an example that violates the previous assumption but for which Liouville's theorem
and the SLP hold.

\medskip
Before going any further, the reader should notice that in the case where the cone is $\mathbb{R}^2$ with the Euclidean metric, it is easy to see that $A\left(R\right)=R$ 
is a good choice. Indeed,
for the case of $\mathbb{R}^2$ it is well known that $\phi\left(r\right)=r$, that $\lambda_m^2=m$, and thus
that we can take $\varphi_m\left(r\right)=r^m$ in equation (\ref{eq:radial_equation}).
This implies the sublinear version of Liouville's
theorem (and the reader
can see how it follows from Theorem \ref{thm:abstract_liouville}): Every harmonic function defined on $\mathbb{R}^2$ of sublinear
growth must be constant.

\medskip
Our second result is the following.
\begin{theorem}
Let $\left(M,g\right)$ be an $n$-dimensional Riemannian cone with metric 
\[
g=dr^2+\phi\left(r\right)^2g_{N},
\]
and $\phi'\left(r\right) \leq \beta$. 
Then $M$ satisfies the SLP.
\end{theorem}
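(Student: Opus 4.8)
The plan is to expand a harmonic function of polynomial growth into its radial eigenmodes and to show that all but finitely many of these modes are forced to vanish.

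Fix $d$ and let $u\in\mathcal H_d$. I would choose an $L^2(N)$-orthonormal basis $\{f_{m,k}:m\ge 0,\ 1\le k\le d_m\}$ of eigenfunctions of $\Delta_\omega$, with $\Delta_\omega f_{m,k}=-\lambda_m^2f_{m,k}$; since $N$ is closed its spectrum is discrete with finite multiplicities $d_m$ and $0=\lambda_0^2<\lambda_1^2\le\lambda_2^2\le\cdots\to\infty$. For each $r>0$ I would expand $u(r,\cdot)\in L^2(N)$ in this basis, $u(r,\omega)=\sum_{m,k}u_{m,k}(r)f_{m,k}(\omega)$ with $u_{m,k}(r)=\int_N u(r,\omega)f_{m,k}(\omega)\,d\mathrm{vol}_N$. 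Differentiating under the integral sign (legitimate because $N$ is compact and $u$ is smooth on $(0,\infty)\times N$) and using $\Delta u=0$ together with self-adjointness of $\Delta_\omega$, each $u_{m,k}$ is $C^2$ on $(0,\infty)$ and solves \eqref{eq:radial_equation}. Cauchy--Schwarz gives $|u_{m,k}(r)|\le\|u(r,\cdot)\|_{L^2(N)}\le \mathrm{Vol}(N)^{1/2}\sup_{\overline{M_1}}|u|<\infty$ for $0<r\le 1$, so $u_{m,k}$ stays bounded as $r\to 0^+$. The Frobenius/indicial analysis of \eqref{eq:radial_equation} at the regular singular point $r=0$ (where $\phi\sim r$) shows that one solution is bounded there --- this is, up to normalization, the function $\varphi_m$ regular at the vertex --- while the second solution is unbounded at $r=0$; hence $u_{m,k}=a_{m,k}\varphi_m$ for a constant $a_{m,k}$. (For $m=0$, $\lambda_0=0$, this says $u_{0,1}$ is constant, since the second solution of $(\phi^{n-1}\varphi')'=0$ is $\int\phi^{1-n}$, unbounded at the vertex for $n\ge2$.)

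Next, Parseval's identity gives, for every $r>0$,
\[
\sum_{m\ge 0}b_m\,\varphi_m(r)^2=\|u(r,\cdot)\|_{L^2(N)}^2\le C^2\,\mathrm{Vol}(N)\,(1+r)^{2d},\qquad b_m:=\sum_{k=1}^{d_m}a_{m,k}^2,
\]
where $C$ is the constant from $|u(x)|\le C(1+|x|)^d$. Since all terms on the left are nonnegative, $b_m\varphi_m(r)^2\le C^2\mathrm{Vol}(N)(1+r)^{2d}$ for every $r$ and every $m$. Here I would invoke the radial comparison carried out in the proof of Corollary \ref{corollary:boundedphi}: because $\phi(r)\le\beta r$ (as $\phi(0)=0$ and $\phi'\le\beta$), and $\varphi_m$ is positive, increasing and satisfies $(\phi^{n-1}\varphi_m')'=\lambda_m^2\phi^{n-3}\varphi_m$, that argument produces, for each $m\ge1$, a lower bound $\varphi_m(r)\gtrsim r^{\gamma_m}$ whose exponent $\gamma_m$ depends only on $n$, $\beta$ and $\lambda_m$ and tends to $\infty$ as $\lambda_m\to\infty$. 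Consequently there is $m_0=m_0(d)$ with $\gamma_m>d$ for all $m\ge m_0$, and for those $m$ the displayed inequality forces $b_m=0$, i.e. $a_{m,k}=0$ for all $k$. Therefore $u(r,\cdot)=\sum_{m<m_0}\sum_{k}a_{m,k}\varphi_m(r)f_{m,k}$ for every $r>0$; the linear map $u\mapsto (a_{m,k})_{m<m_0,\,1\le k\le d_m}$ is injective (if all $a_{m,k}$ vanish then $\|u(r,\cdot)\|_{L^2(N)}=0$ for all $r>0$, hence $u\equiv 0$ by continuity on $M$), so $\dim\mathcal H_d\le\sum_{m<m_0}d_m<\infty$. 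As $d$ was arbitrary, this is exactly the SLP.

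The step I expect to be the real obstacle is the quantitative claim $\gamma_m\to\infty$: for the SLP one cannot merely know that every nonconstant mode grows at least like a single fixed power of $r$ (as suffices for Liouville's theorem), one needs the growth exponent of $\varphi_m$ to increase without bound with $\lambda_m$, and it is precisely the compactness of $N$ --- through $\lambda_m^2\to\infty$ --- that makes this work. The remaining ingredients are routine: justifying the term-by-term reduction to \eqref{eq:radial_equation}, the Frobenius analysis at $r=0$ ruling out the singular radial solution, and the bookkeeping with the finite multiplicities $d_m$.
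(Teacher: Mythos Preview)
Your proposal is correct and follows essentially the same approach as the paper: expand a harmonic function in the eigenbasis of $\Delta_\omega$, use Parseval to bound each radial coefficient by the growth of $u$, and then invoke the quantitative lower bound $\varphi_m(r)\gtrsim r^{\gamma_m}$ with $\gamma_m\to\infty$ (from the Riccati comparison in the proof of Corollary~\ref{corollary:boundedphi}) to kill all but finitely many modes. The only minor difference is how the expansion $u_{m,k}=a_{m,k}\varphi_m$ is obtained: the paper derives it from the representation (\ref{eq:radius_1}) established in Section~\ref{sect:proof} via the Dirichlet problem on $M_R$, whereas you project directly and use a Frobenius analysis at $r=0$ to exclude the singular solution; both routes yield the same conclusion.
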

%Again, for SLP to hold it is enough to have the bound $0<\phi\leq \beta r$ 
%(i. e., the restriction on $n$ is not important), as we can choose
%$\beta'>\beta$ so that that the restriction on the dimension holds whenever $n\geq 3$.
%Of course, the estimate on the dimension of the spaces of harmonic functions will depend on 
%this new $\beta'$.

\medskip
Related to this result, various authors have contributed to give an answer to 
the
question on how fast should grow a non-constant harmonic function, some of them
in more general context (we cite a few: \cite{Co, Grygoryan92, Kasue, Saloff-Coste92}). 
However, the case considered in
this paper provide more explicit estimates and examples that might lead to
some refinements on known results. Regarding surfaces, our studies include 
surfaces with no finite total curvature (see the example at the end of Section
\ref{subsect:boundedphi}; this should be contrasted
with the results in \cite{LT91}).

\subsection{A nonlinear Liouville theorem}
In the final section of the paper, we extend the approach
from \cite{Co} to the nonlinear setting by proving a Liouville-type theorem for \(p\)-subharmonic functions on complete surfaces with a pole. We show for $p\geq 2$ that if the Gaussian curvature satisfies either
\[
K \geq -\frac{p - 1}{r^2 \log r} \quad \text{or} \quad K \geq -\frac{(p - 1)(p - 2)}{r^2}
\]
outside a compact set, then any \(p\)-subharmonic function with sufficiently slow growth is constant. This result generalizes both Milnor’s theorem for harmonic functions and the Cheng--Yau Liouville theorem, and appears to be new in the literature. It provides a sharp curvature threshold for constancy of \(p\)-subharmonic functions, at least for \(p \geq 2\).

\medskip
\subsection{Organization of the paper.}
This paper is organized as follows. In Section 
\ref{sect:proof}
we prove Theorem \ref{thm:abstract_liouville}, in Section \ref{sect:applications} we present 
some applications, among them a proof
of Corollary \ref{corollary:boundedphi}, in Section \ref{sect:slp} we discuss the Strong Liouville Property,
and in Section \ref{sect:nonlinear_liouville} we present
our nonlinear generalization of Liouville's
theorem.

%\section{Preliminaries and Notation}

%Here we review some of the theory and the results presented in \cite{Co2}

\section{Proof of Theorem \ref{thm:abstract_liouville}}
\label{sect:proof}

Let $u:\mathbb{R}^n\longrightarrow \mathbb{R}$ be a harmonic function.
For $R>0$ arbitrary, recall that
\[
M_R=\left(\left[0,R\right)\times N\right)/\left(\left\{0\right\}\times N\right),
\]
%We shall denote by
%\[
%M\left(u;R\right)=\sup_{x\in M_R\left(O\right)}\left|u\left(x\right)\right|
%=\max_{x\in \partial M_R\left(O\right)}\left|u\left(x\right)\right|
%\]
and 
\[
u_R\left(\omega\right)=u|_{\partial M_R}.
\]
Since $u_R$ is smooth, we can expand it as the Fourier series
\[
u_R\left(\omega\right)=\sum_k\sum_m c_{k,m,R}f_{m,k}\left(\omega\right),
\]
where $f_{m,k}$ is an eigenfunction of the eigenvalue $\lambda_m^2$
of the Laplacian $\Delta_{\omega}$ of $N$. From now on, we shall
assume that the $\left\{f_{m,k}\right\}$ form an orthonormal basis
for $L^{2}\left(N\right)$.

\medskip
The harmonic extension of $u_R$ to $M_R$, and thus, by uniqueness,
$u$ on $M_R$
(see Theorem 3 in \cite{Co2}), is given by
\begin{equation}
\label{eq:radius_R}
u\left(r,\omega\right)=\sum_m \frac{\varphi_m\left(r\right)}{\varphi_m\left(R\right)}
\sum_k c_{k,m,R}f_{m,k}\left(\omega\right).
\end{equation}
Indeed, the proof is exactly the
same as that given in \cite{Co2} for the Dirichlet Problem at Infinity.
First, we must show that the sum converges in $L^2$-sense for
every $r\leq R$. Here we need the fact that
the $\varphi_m$'s can be chosen to be nonnegative and \emph{nondecreasing}:
that this can be done is shown in \cite{Co2}. 
Therefore $0\leq\varphi_m\left(r\right)/\varphi_m\left(R\right)\leq 1$ for 
$0\leq r\le R$, and, from having convergence at $R$, our assertion follows
immediately. 
Next, we must show 
that the boundary condition at $\partial M_R$ is met as $r\rightarrow R$, and for this we also ask the
reader to consult \cite{Co2}. 
From now on, we shall assume, without loss of generality, that $c_{0,0}=0$. 

\medskip
Thus far, we have shown that in $M_R$, $u$ can be written as. 
We shall prove that $u$
can be represented, in the whole of $M$, as
\begin{equation}
\label{eq:radius_1}
\sum_k\frac{\varphi_m\left(r\right)}{\varphi_m\left(1\right)}\sum_m c_{m,k,1}f_{m,k}\left(\omega\right),
\end{equation}
where 
\[
\sum_{k,m}c_{m,k,1}f_{m,k}\left(\omega\right)
\]
is the Fourier expansion of $u$ restricted to $\partial M_1$.
By evaluating (\ref{eq:radius_R}) at $R=1$ we obtain (\ref{eq:radius_1}).

\medskip
Hence, by considering (\ref{eq:radius_R}) at any $R>1$ and evaluating at $r=1$
and comparing with (\ref{eq:radius_1}) when evaluated at $r=1$,
by uniqueness, we must have that
\[
c_{m,k,R}=\frac{\varphi_m\left(R\right)}{\varphi_m\left(1\right)}c_{m,k,1},
\]
but then, it follows that (\ref{eq:radius_1}) represents $u$ for any $R\geq 1$, and
as this identity is already valid for $R<1$, our claim is proved.

\medskip
Now we can use Plancherel's theorem to compute
\[
\left\|u\right\|^2_{L^2\left(\partial M_R\left(O\right)\right)}
=
\mbox{Vol}\left(N,g_{\omega}\right)
\sum_k\left(\frac{\varphi_m\left(R\right)}{\varphi_m\left(1\right)}\right)^2\sum_m \left|c_{m,k,1}\right|^2\phi^{n-1}\left(R\right),
\]
but if we assume that $u\left(\omega, R\right)=o\left(A\left(R\right)\right)$,
we must have that
\[
\sum_k\left(\frac{\varphi_m\left(R\right)}{\varphi_m\left(1\right)}\right)^2\sum_m \left|c_{m,k,1}\right|^2\phi^{n-1}\left(R\right)
= o\left(A\left(R\right)^2\right)\phi^{n-1}\left(R\right),
\]
which, under the hypotheses of the theorem,
that is because of the fact that $\varphi_m\left(R\right)\gg A\left(R\right)$, $m\geq 1$, it
must hold that $c_{m,k,1}=0$ for all $m,k$, except possibly for
$m=k=0$, that is, $u$ must be constant.

\medskip
The argument above has some bearing in the discussion of the Strong Liouville Property (SLP) for Riemannian cones, as we shall see in Section \ref{sect:slp} below.
%\medskip
%\noindent
%{\bf Remark.} The careful reader might have noticed that, as long as $\varphi_m\left(r\right)\neq 0$ for
%$r>0$, the hypothesis on $\phi'\geq 0$ is not needed, as it is only purpose is to
%guarantee that $\varphi_m\left(r\right)=0$ do not happen por positive $r$.

\section{Applications}
\label{sect:applications}

\subsection{The case of dimension 2}

Before going to the applications of Theorem \ref{thm:abstract_liouville} as stated in the corollary, we discuss the case
of dimension 2, which is not included in any of them. 
 That is, we discuss the case of $\mathbb{R}^2$ with a metric of the
form
\[
dr^2+\phi\left(r\right)^2 d\theta^2.
\]
More general cases are discussed in \cite{Co}, but we want to recall a few facts
and place them in this new framework (also, the reader should put this discussion in the
context of Milnor's celebrated paper \cite{Milnor}).

\medskip
First of all, the
representation (\ref{eq:radius_1}) is valid for any harmonic function.
Thus, to get the slowest possible non-constant harmonic function
on $\mathbb{R}^2$ with the given metric,
we need to bound $\varphi_m$, and
in this case, the radial equation is given by
\[
\varphi_m''+\frac{\phi'}{\phi}\varphi_m'-\frac{m^2}{\phi^2}\varphi_m =0,
\]
and it can be explicitly solved, and so we have
\[
\varphi_m\sim \exp\left(m \int_1^r\frac{1}{\phi\left(s\right)}\,ds\right).
\]
Therefore, we can take as $A$ the function $\exp\left(\int_1^r\frac{1}{\phi\left(s\right)}\,ds\right)$,
and if the Gaussian curvature satisfies $K\geq 0$, then $\phi\left(r\right)\leq r$, and hence
in this case we recover Liouville's theorem in the case of nonnegative Gaussian curvature:
a harmonic function of sublinear growth must be constant.

\medskip
On the other hand, notice that if $\phi\left(r\right)\sim r^{\beta}$ with $0<\beta<1$, we see that 
there cannot be non-constant harmonic functions of polynomial growth, and, further, we also get 
that the slowest growing non-constant harmonic function 
grows at least as
\[
\exp\left(\frac{1}{1+\beta}r^{1-\beta}\right).
\]
The previous analysis is related to the following fact. Being harmonic
is a conformal invariant in dimension 2. Since the functions
of the form $r^me^{\pm m i\theta}$ generate the space of harmonic functions
in $\mathbb{R}^2$, one should expect that 
$\exp\left(m\int_1^{r}\frac{1}{\phi\left(s\right)}\,ds\right)e^{\pm i m\theta}$
generates it on a surface with a rotationally symmetric metric
(see also \cite{Bravo}).

\subsection{The case of $\phi' \leq \beta $}
\label{subsect:boundedphi}

This case includes that of nonnegative radial curvature and also that
of finite total radial curvature. So,
let us assume that $\phi'\leq \beta$ (recall that in any case we are assuming 
that $\phi'\geq 0$).
To apply the previous theorem, we need to find what works as $A\left(r\right)$
in this case.
To do so, let us write
\[
\varphi_m\left(s\right) = B\exp\left(\int_1^s \frac{\lambda_m^2}{\phi^{n-1}\left(\tau\right)}x_m\left(\tau\right)\, d\tau\right),
\]
with $B=\varphi_m\left(1\right)$,
for $s\geq 1$, and $x_m\left(t\right)$ a smooth function. 
As shown in \cite{Co2}, $x_m\left(t\right)$ satisfies the Riccati equation
\begin{equation}
\label{eq:riccati}
x_m'\left(s\right)+\frac{\lambda_m^2}{\phi^{n-1}\left(s\right)}x_m^2\left(s\right)=\phi^{n-3}\left(s\right).
\end{equation}
Since the $\varphi_m$'s are non-decreasing (by the arguments in \cite{Co2}), it follows that $x_m'\left(t\right)\geq 0$
which is a basic but important observation.

\medskip
Let 
\[
\eta_m=\frac{1}{2}\left(-\frac{\beta\left(n-2\right)}{\lambda_m}+\sqrt{\frac{\beta^2\left(n-2\right)^2}{\lambda_m^2}+4}\right).
\]
At any given time, either
\[
\frac{\lambda_m^2}{\phi^{n-1}\left(s\right)}x_m^2\left(s\right)> \eta_m^2\phi^{n-3}\left(s\right),
\]
that is
\begin{equation}
\label{ineq:ineq_0}
x_m\left(s\right)\geq \frac{\eta_m}{\lambda_m}\phi^{n-2},
\end{equation}
or
\begin{equation}
\label{ineq:ineq_1}
x_m'\left(s\right)> \left(1-\eta_m^2\right) \phi^{n-3}\left(s\right).
\end{equation}
Our choice of $\eta$ guarantees that
\[
\frac{1-\eta_m^2}{\beta\left(n-2\right)}=\frac{\eta_m}{\lambda_m}.
\]

%The first inequality implies that in such interval of time 
%\begin{equation}
%\label{ineq:ineq_2}
%x\left(t\right)\geq \frac{\eta}{\lambda_m}\phi^{n-2}
%\end{equation}
We shall show the following. Assume that at time $t=t_0$ the 
following estimate holds.
\begin{equation}
\label{ineq:ineq_3}
x_m\left(t\right)\geq \frac{1-\eta_m^2}{\beta\left(n-2\right)}\phi^{n-2}\left(t\right),
\end{equation}
then there is an $\epsilon>0$ such that the same estimate is valid
on $\left(t_0,t_0+\epsilon\right)$. To do so, choose $\epsilon>0$ such that
either (\ref{ineq:ineq_0}) or (\ref{ineq:ineq_1}) is valid. In the first case, (\ref{ineq:ineq_3}) follows immediately.
In the second case, we estimate as follows. Let $\tau\in \left(t_0,t_0+\epsilon\right)$, then
\begin{eqnarray*}
 x_m\left(\tau\right)&=&x_m\left(t_0\right)+\int_{t_0}^{\tau}\phi^{n-3}\left(s\right)\,ds\\
 &\geq& x_m\left(t_0\right)+
 \left(1-\eta_m^2\right)\int_{t_0}^{\tau}\phi^{n-3}\left(s\right)\frac{\phi'\left(s\right)}{\beta}\,ds\\
 &=& x_m\left(t_0\right)+\frac{1-\eta_m^2}{\beta\left(n-2\right)}\left(\phi^{n-2}\left(\tau\right)
 -\phi^{n-2}\left(t_0\right)\right)\\
 &\geq& 
 \frac{1-\eta_m^2}{\beta\left(n-2\right)}\phi^{n-2}\left(\tau\right).
\end{eqnarray*}
Notice that by continuity, estimate (\ref{ineq:ineq_3}) holds on a closed set, and that 
this set is nonempty, 
since the estimate is satisfied when $t=0$. Therefore, the estimate holds 
for all $t\in \left[0,\infty\right)$. Hence we have 
\[
x_m\left(t\right)\geq \frac{\eta_m}{\lambda_m}\phi^{n-2}\left(t\right).
\]
Thus, we can take
\[
\eta_m=\frac{1}{2}\frac{\lambda_m^2}{\beta\left(n-2\right)}\left[\sqrt{1+\frac{4\lambda_m^2}{\beta^2\left(n-2\right)^2}}-1\right],
\]
and this gives
\begin{equation}
\label{eq:harmonic_growth}
x_m\left(t\right)\geq 
a_m
\phi^{n-2}\left(t\right),
\end{equation}
where 
\[
a_m=\frac{1}{2}\frac{\lambda_m}{\beta\left(n-2\right)}\left[\sqrt{1+\frac{4\lambda_m^2}{\beta^2\left(n-2\right)^2}}-1\right].
\]
This shows that 
\[
\varphi_m \geq \exp\left(a_m\int_1^t\frac{1}{\phi\left(s\right)}\,ds\right),
\]
and, noticing that the $a_m$'s form an increasing sequence, if a harmonic function
is $o\left(A\left(R\right)\right)$ with
\[
A\left(t\right)= \exp\left(a_1\int_1^t\frac{1}{\phi\left(s\right)}\,ds\right),
\]
then it must be constant. Therefore, Liouville's theorem will hold as soon as
$\displaystyle\int_1^{\infty} \frac{1}{\phi\left(s\right)}\,ds =\infty$. But if $\phi'\left(r\right)\leq \beta$,
then $\phi\leq \beta r$, and in consequence Liouville's theorem holds.

\medskip
We now examine what this says about Liouville's theorem for rotationally symmetric metrics
in $\mathbb{R}^n$. In this case,
we have the round metric on $N=\mathbb{S}^{n-1}$, and it is well known that
$\lambda_1^2=n-1$. Thus, we can take
\[
\eta_1=\frac{1}{2}\frac{\beta\left(n-2\right)}{\sqrt{n-1}}\left[
\sqrt{1+\frac{4}{\beta^2}\frac{n-1}{\left(n-2\right)^2}}-1\right].
\]
Just to show a simple estimate (without being sharp), notice
that
\[
\sqrt{1+x}-1\geq \frac{1}{2\sqrt{2}}x,\quad 0\leq x\leq 1.
\]
Therefore, if
\[
n\geq 2\left(1+\frac{1}{\beta^2}\right)+2\sqrt{\left(1+\frac{1}{\beta^2}\right)^2-\left(1+\frac{1}{\beta^2}\right)},
\]
then we can take
\[
\eta_1 \geq \frac{1}{\beta\sqrt{2}}\frac{\sqrt{n-1}}{n-2},
\]
and hence, since it is not difficult to see that $\eta_m\geq \eta_1$, this yields
\[
x_m\left(t\right)\geq \frac{1}{\beta\sqrt{2}\left(n-2\right)}\phi^{n-2}\left(t\right).
\]
We have obtained an estimate:
\begin{equation}
\label{eq:harmonic_growth_2}
\varphi_m\left(t\right)\geq \exp\left(\frac{n-1}{\beta\sqrt{2}\left(n-2\right)}\int_1^t\frac{1}{\phi\left(s\right)}\,ds\right)
\geq t^{\frac{n-1}{\beta^2 \sqrt{2} \left(n-2\right)}},
\end{equation}
using the assumption that $\phi\left(r\right)\leq \beta r$. This proves
our estimate for the slowest growing harmonic function in the case considered, and
provides a proof of Corollary \ref{corollary:boundedphi}.

\medskip
When the radial curvature is nonnegative we have that $0\leq \phi\left(r\right)\leq r$, 
so we can take $\beta=1$ and recover Liouville's theorem
for bounded harmonic functions. 
Furthermore, if $\phi\left(r\right)=o\left(r\right)$ from estimate (\ref{eq:harmonic_growth})
it can be deduced that harmonic functions of at most polynomial growth are constant.

\subsubsection{Examples} To see the reach of the previous estimates, consider the following example:
\[
\phi = \frac{1}{2}\left(r+\sin r\right).
\]
In this case $\beta=1$, and thus Liouville's theorem holds. Moreover, any non-constant
harmonic function must grow faster that $r^{\frac{1}{\sqrt{2}}\frac{n-1}{n-2}}\sim r^{0.7071\dots}$ 
for $n$ large. Notice that
here Cheng-Yau does not apply, since the radial curvature in this case
is given by
\[
K\left(r\right)=\frac{\sin r}{r+\sin r}.
\]
From Gauss equation, it is not difficult to see that this term dominates
the expression for the radial Ricci tensor,
so the radial part of the Ricci tensor is sometimes
negative (even outside any compact set).

\medskip
In the case of surfaces,
notice that a non-constant harmonic
function must be at least of quadratic
growth, because in this case the slowest possible
growth for a harmonic function is given by
(see \cite{Bravo})
\[
\exp\left(\int^r \frac{2}{s+\sin s}\,ds\right)\sim r^2.
\]
Compare this estimate with Theorem 4.3 in \cite{Saloff-Coste92}.
Notice also that a surface with the metric described above does not have finite total 
radial curvature.
Indeed, $K=\sin r/\left(r+\sin r\right)$, and hence
\[
\int_M\left|K\right|\,dA=\frac{1}{2}\int_0^{\infty}\left|\sin r\right|\,dr.
\]
However, it does satisfy the Strong Liouville Property as well. The reader should also
compare this with the work developed in \cite{LT91}, where the case of surfaces of
finite total curvature is covered. 

Of course, the previous observations raise the question of what would be 
a sharp assumption on the curvature of an open Riemannian surface $M^2$ so that it satisfies the
SLP.

\section{Strong Liouville Property}
\label{sect:slp}

%What follows, our discussion of the Strong Liouville Property, is not a direct application of our main theorem but of its proof. 
First of all, observe that the fact that we can expand
any harmonic function as
(\ref{eq:radius_1}) and an application of Plancherel's theorem, just as is done in
(\ref{sect:proof}), shows that
the space of harmonic functions whose growth is slower than $r^d$, is spanned 
by the set
\[
\left\{\varphi_m\left(r\right)f_{m,k}\left(\omega\right):\, \varphi_m\left(r\right)\lesssim r^d\right\}.
\]
%Notice that by our main result, if $\phi\left(r\right)=o\left(r\right)$ the space of harmonic functions
%of polynomial growth has dimension 1 as it only contains the constants. 

\medskip
Under the assumption that 
$\phi'\left(r\right)\leq \beta$,
the dimension of the space of harmonic functions of growth $r^d$ is finite-dimensional. %Let us
%estimate its dimension in the case of a metric on $\mathbb{R}^n$. 

Next, observe that, since $\lambda_m\rightarrow \infty$ as $m\rightarrow \infty$, 
from estimate (\ref{eq:harmonic_growth}) we obtain for $m$ large enough
\[
x_m\left(t\right)\geq 
\frac{1}{2}\frac{\lambda_m}{\beta\left(n-2\right)},
\]
and thus
\[
\varphi_m\left(r\right)\gtrsim r^{\frac{\lambda_m^3}{\beta^2\left(n-2\right)}},
\]
for $m$ large enough, which immediately shows that SLP holds.
%If we denote by $\delta_k$ the dimension of the homogeneous polynomials
%of degree $k$, then the dimension of $\mathcal{H}_d$ is bounded above by
%\[
%\sum_{\left\{k: \frac{\sqrt{k\left(n+k-2\right)}}{\beta}\leq d\right\}} \delta_k
%= \sum_{\left\{k: \frac{\sqrt{k\left(n+k-2\right)}}{\beta}\leq d\right\}} \binom{n+k-1}{n-1}.
%\]
%To give $\alpha$ a more meaningful form, we relate it to the growth of the volumes of balls...

Here, it is interesting that we can show the following family of examples, where 
any bounded harmonic function must be constant, but the strong Liouville property does not hold:
in particular $\mbox{dim}\left(\mathcal{H}_1\right)=\infty$.
Take $\mathbb{R}^n$ with a metric of the form
\[
g=dr^2+\phi\left(r\right)^2g_{\mathbb{S}^{n-1}},
\]
with $\phi\left(r\right)\sim r\log r$ for $r$ large, with $\phi'\geq 0$. We give a short argument for this.
From the Riccati equation (\ref{eq:riccati}), and the fact that $x_m'\left(t\right)\geq 0$ we can deduce that
\[
x_m\left(t\right)\leq \frac{1}{\lambda_m}\phi^{n-2}\left(t\right),
\]
so that 
\[
\varphi_m\left(r\right)\lesssim \exp\left(\lambda_m \int^r\frac{1}{\phi\left(s\right)}\,ds\right).
\]
Therefore, 
\[
\varphi_m\lesssim\left(\log r\right)^{m},
\]
and our assertion follows.

\subsection{On the doubling condition: An example in dimension 2}
\label{subsect:counterexample}
Here we show an example of a metric in $\mathbb{R}^2$ for which 
the doubling condition, as described in the introduction does not hold, 
but for which both, Liouville's theorem and the SLP hold.

\medskip
Let $\psi:\mathbb{R}\longrightarrow \left[0,1\right]$ be a standard bump function such that
$\psi\equiv 1$ on $\left[-1,1\right]$, $\mbox{supp}\left(\psi\right)\subset \left[-2,2\right]$,
increasing in $\left[-2,-1\right)$, decreasing
in $\left(1,2\right]$, with
$\psi\left(-\frac{3}{2}\right)=\frac{1}{2}
=\psi\left(\frac{3}{2}\right)$, and symmetric
with respect to $r=0$.
Define
\begin{equation}
\label{eq:counterexample}
\phi\left(r\right)=\sum_{k=0}^{\infty}r\psi\left(r-6k\right) + \sum_{k=0}^{\infty} e^r \psi\left(r-\left(6k+3\right)\right).
\end{equation}
It is not difficult to show that there is a constant such that
\[
c\log r \leq \int_1^r \frac{1}{\phi\left(s\right)}\,ds \leq \log r.
\]
These two inequalities show that $\mathbb{R}^2$ with the metric
\[
g=dr^2+\phi\left(r\right)^2 d\theta^2,
\]
satisfies Liouville's theorem and the SLP. 

However, the exponential growth in volume provided by the second sum on the 
right-hand side of (\ref{eq:counterexample}) does not allow for the doubling property to hold. In fact,
\[
\mbox{Vol}\left(B_{6k}\right)\leq
e^{6k},
\]
and 
\[
\mbox{Vol}\left(B_{12k}\right)
\geq \mbox{Vol}\left(B_{6\left(2k-1\right)}\right)
\geq \int_{12k-7}^{12k-6}e^r\,dr=e^{12k-7}
\left(e-1\right).
\]
Therefore,
\[
\frac{\mbox{Vol}\left(B_{12k}\right)}{\mbox{Vol}\left(B_{6k}\right)}\geq 
e^{6k-7}\left(e-1\right)
\rightarrow \infty.
\]
This shows that, although sufficient, the doubling
condition is not necessary for Liouville's theorem
or the SLP to hold.

\section{A nonlinear Liouville's theorem: the case of the
$p$-Laplacian}
\label{sect:nonlinear_liouville}

Having established Liouville-type results for harmonic functions, we now explore their nonlinear analogs for the 
$p$-Laplacian, which generalize classical theorems under sharper curvature conditions.

\medskip
In this section, we will consider $\mathbb{R}^2$ with a 
metric of the form
\begin{equation}
\label{eq:metric_general_form}
g=dr^2 + \phi\left(\theta, r\right)^2d\theta^2.
\end{equation}

The $p$-Laplacian of a function $u$ is given by
\[
\Delta_p u = \mbox{div}
\left(\left|\nabla u\right|^{p-2}\nabla u\right).
\]
The $p$-Laplacian comes
from the Euler-Lagrange equations satisfied
by the functionals
\[
\int_M \left|\nabla u\right|^{p}\, dV_g, \quad p>1.
\]
The case $p=2$ corresponds
to the usual Laplacian, and
hence, the $p$-Laplacian 
appears as a natural, nonlinear, generalization of the
Laplacian.

\medskip
Although most classical Liouville-type theorems are for the Laplacian, here we extend the method used in
\cite{Co}
to the nonlinear setting of the $p$-Laplacian.
We begin with the following
general comparison principle. In order to state 
our basic comparison result, we introduce the
notation
\[
M\left(u,r\right)=\sup_{\theta\in \mathbb{S}^1}
\left|u\left(\theta,r\right)\right|.
\]

\begin{theorem}
\label{thm:comparison_1}
 Let 
 $u:\mathbb{R}^2\longrightarrow\mathbb{R}$, with
 $\mathbb{R}^2$ endowed with a smooth complete Riemannian
 metric given in polar coordinates by
 (\ref{eq:metric_general_form}).
 Assume that its Gaussian curvature $K$ satisfies
 \[
 K\left(\theta,r\right)\geq 
 -\frac{z''\left(r\right)}{z\left(r\right)},
 \]
 outside a compact subset.
 
 Let 
 \[
 h\left(r\right)=\int^r
 \frac{1}{z\left(s\right)^{\frac{1}{p-1}}}\,ds.
 \]
 If $u\in C^1\left(\mathbb{R}^2\right)$ 
 is $p$-subharmonic ($\Delta_p u\geq 0$ in the weak sense) and 
 \[
 \liminf_{r\rightarrow \infty}
 \frac{M\left(u,r\right)}{h\left(r\right)}=0,
 \]
 then $u$ must be constant.
\end{theorem}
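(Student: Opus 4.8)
The plan is to reduce the statement to a one–dimensional comparison argument along rays, in the spirit of Milnor's proof and its refinement in \cite{Co}. First I would set up the geometry: write the metric as $g=dr^2+\phi(\theta,r)^2\,d\theta^2$, recall that the Gaussian curvature is $K=-\phi_{rr}/\phi$, and note that the curvature hypothesis $K\geq -z''/z$ outside a compact set translates, via a standard Sturm comparison between the Jacobi-type equation $\phi_{rr}+K\phi=0$ (in the $\theta$ direction) and $z''-(z''/z)z=0$, into a lower bound $\phi(\theta,r)\gtrsim z(r)$ for $r$ large, uniformly in $\theta$. This is the geometric input that makes $h(r)=\int^r z(s)^{-1/(p-1)}\,ds$ the relevant gauge: $h$ will turn out to be (up to constants) the slowest-growing nonconstant $p$-harmonic function of the separated form, analogous to how $\int^r \phi^{-1}$ governed the $p=2$ case in the earlier sections.

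Next I would construct an explicit radial supersolution (barrier). Seeking $w=w(r)$ with $\Delta_p w=0$, one computes $\Delta_p w = \phi^{-1}\big(\phi\,|w'|^{p-2}w'\big)'$ along a ray, so a $p$-harmonic radial function satisfies $\phi\,|w'|^{p-2}w'=\mathrm{const}$, i.e.\ $w'\sim (\mathrm{const}/\phi)^{1/(p-1)}$, giving $w(r)\sim \int^r \phi(\theta,s)^{-1/(p-1)}\,ds$. Using $\phi\gtrsim z$ we get a genuine $p$-superharmonic function $W(r)$ on $\mathbb{R}^2$ minus a compact set with $W(r)\lesssim h(r)$ and $W(r)\to\infty$; more importantly $W$ has positive, bounded-below ``flux'' $\phi\,|W'|^{p-2}W'\geq c>0$. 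Then, given $p$-subharmonic $u$, I fix a large geodesic sphere $\partial B_{r_0}$, let $a=\max_{\partial B_{r_0}}u$, and on the annulus $B_R\setminus B_{r_0}$ compare $u$ with $a+\varepsilon\,(W(r)-W(r_0))$. By the comparison principle for the $p$-Laplacian (a weak-solution version of the maximum principle, valid since $u\in C^1$ is a weak subsolution and the barrier is a classical supersolution), and because the growth hypothesis $\liminf_{r\to\infty} M(u,r)/h(r)=0$ lets us choose a sequence $R_j\to\infty$ along which $M(u,R_j)=o(h(R_j))=o(W(R_j))$, the boundary inequality on $\partial B_{R_j}$ holds for every fixed $\varepsilon>0$ once $j$ is large. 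Hence $u\leq a+\varepsilon(W(r)-W(r_0))$ on each annulus; letting $R_j\to\infty$ and then $\varepsilon\to 0$ gives $u\leq a$ on all of $\mathbb{R}^2\setminus B_{r_0}$, so $u$ is bounded above, and applying the same argument to $-u$ shows $u$ is bounded. A bounded $p$-subharmonic function on a surface attaining an interior maximum of its ``reach'' must be constant by the strong maximum principle; alternatively, once $u$ is bounded one reruns the barrier argument with $h$ replaced by any slower gauge tending to infinity (the divergence $\int^\infty z^{-1/(p-1)}=\infty$, which follows from the hypothesis and the form of $h$) to force $u$ constant.

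The main obstacle, and the step I would spend the most care on, is the comparison/maximum principle for the $p$-Laplacian in this weakly-formulated, non-compact setting: one needs that a $C^1$ weak $p$-subsolution $u$ and a classical $p$-supersolution $W$ with $u\leq W$ on the boundary of an annulus satisfy $u\leq W$ inside. This is standard on bounded domains for the monotone operator $\xi\mapsto |\xi|^{p-2}\xi$ (test the weak inequality with $(u-W)_+$ and use strict monotonicity), but here I must be careful that the annuli are the right domains, that the regularity of $u$ is genuinely enough to use it as a subsolution up to the boundary, and that the constants in $\phi\gtrsim z$ are uniform in $\theta$ so the barrier is globally valid outside the compact set. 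A secondary technical point is justifying the Sturm-type lower bound $\phi(\theta,r)\gtrsim z(r)$ uniformly: since the hypothesis on $K$ only holds outside a compact set, one compares from the outer boundary of that set and must control the (bounded, positive) initial data $\phi,\phi_r$ there, which is where completeness and smoothness of $g$ enter. Finally I would record the two corollaries: $z(r)=r^{p-1}$ gives $h(r)\sim\log r$ and the threshold $K\geq -(p-1)(p-2)/r^2$, recovering Cheng–Yau at $p=2$ in the guise $K\geq 0\Rightarrow$ sublinear $\Rightarrow$ Milnor; and $z(r)=(r^2\log r)^{?}$ chosen so that $z''/z\sim (p-1)/(r^2\log r)$ gives the logarithmic threshold, with $h(r)$ the corresponding (slightly super-logarithmic) gauge.
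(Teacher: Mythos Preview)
Your overall architecture---build a radial $p$-supersolution, compare on annuli, finish with the strong maximum principle---matches the paper's, but the barrier step has a genuine error. Sturm comparison applied to $\phi_{rr}+K\phi=0$ against $z''-(z''/z)z=0$ with $K\ge -z''/z$ gives the \emph{opposite} direction to what you write: one obtains $\phi_r/\phi\le z'/z$, and after integrating this yields $\phi\lesssim z$, not $\phi\gtrsim z$. More importantly, a pointwise bound $\phi$ versus $z$ in either direction is not what makes the barrier work. The paper computes that for a radial function $w$,
\[
\Delta_p w=|w'|^{p-2}\Bigl[(p-1)w''+\tfrac{\phi_r(\theta,r)}{\phi(\theta,r)}\,w'\Bigr],
\]
and observes that $h$ is built precisely so that $(p-1)h''+(z'/z)h'=0$; then the logarithmic-derivative inequality $\phi_r/\phi\le z'/z$ (and $h'>0$) gives $\Delta_p h\le 0$ directly. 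Your route through ``$\phi\gtrsim z\Rightarrow W$ superharmonic with $W\lesssim h$'' is not a valid deduction, and even if it were, $W\lesssim h$ would be the wrong direction: to pass from the hypothesis $M(u,r)=o(h(r))$ to the boundary inequality on $\partial B_{R_j}$ you need a supersolution growing \emph{at least} like $h$, not at most.

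A smaller issue: you cannot ``apply the same argument to $-u$''. The $p$-Laplacian is odd, so $-u$ is $p$-\emph{super}harmonic, and your barrier argument (subsolution below supersolution) does not run for it. The paper avoids this entirely: once $u\le M(u,R_1)$ outside $B_{R_1}$, the maximum of $u$ on any $\overline{B_R}$ with $R>R_1$ is attained in the interior, and the strong maximum principle for $p$-subharmonic functions finishes the proof without ever bounding $u$ from below.
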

For the proof, we shall need the following computation.

\begin{lemma}
Let $g$ be a smooth Riemannian metric on $\mathbb{R}^2$ of 
the form 
(\ref{eq:metric_general_form}), where $\phi$ is not necessarily radial. Let $u = u(r)$ be a radial function. Then the $p$-Laplacian of $u$ satisfies
\[
\Delta_p u(r) = |u'(r)|^{p-2} \left[ (p - 1) u''(r) + \frac{\partial_r \phi(r, \theta)}{\phi(r, \theta)} u'(r) \right].
\]
\end{lemma}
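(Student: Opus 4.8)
The plan is to compute the $p$-Laplacian of a radial function directly from the definition $\Delta_p u = \operatorname{div}(|\nabla u|^{p-2}\nabla u)$ in the given coordinates. First I would record the relevant Riemannian data for the metric $g = dr^2 + \phi(\theta,r)^2\,d\theta^2$: the metric matrix is diagonal with $g_{rr}=1$, $g_{\theta\theta}=\phi^2$, the inverse is $g^{rr}=1$, $g^{\theta\theta}=\phi^{-2}$, and the volume density is $\sqrt{\det g} = \phi$. For a radial function $u=u(r)$ one has $\nabla u = u'(r)\,\partial_r$ (since the $\theta$-component of the gradient involves $\partial_\theta u = 0$), so $|\nabla u|^2 = g^{rr}(u')^2 = (u')^2$ and hence $|\nabla u|^{p-2} = |u'(r)|^{p-2}$, a function of $r$ alone.

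Next I would apply the coordinate formula for the divergence of a vector field $X$, namely $\operatorname{div} X = \frac{1}{\sqrt{\det g}}\partial_i(\sqrt{\det g}\,X^i)$. Here the vector field is $X = |\nabla u|^{p-2}\nabla u = |u'(r)|^{p-2} u'(r)\,\partial_r$, which has only a radial component $X^r = |u'(r)|^{p-2}u'(r)$ and no $\theta$-component. Therefore
\[
\Delta_p u = \frac{1}{\phi}\,\partial_r\!\left(\phi\,|u'(r)|^{p-2}u'(r)\right).
\]
Carrying out the $r$-derivative by the product rule gives
\[
\Delta_p u = \frac{\partial_r\phi}{\phi}\,|u'|^{p-2}u' + \partial_r\!\left(|u'|^{p-2}u'\right),
\]
and the derivative of the nonlinear term is $\partial_r(|u'|^{p-2}u') = (p-1)|u'|^{p-2}u''$ (valid wherever $u'\neq 0$, by the chain rule applied to the $C^1$ function $t\mapsto |t|^{p-2}t$ for $p\geq 2$). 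Factoring out $|u'(r)|^{p-2}$ yields exactly the claimed identity.

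The only genuinely delicate point is the differentiation of $|u'|^{p-2}u'$, and more generally the regularity: for $p\geq 2$ the map $t\mapsto |t|^{p-2}t$ is $C^1$ with derivative $(p-1)|t|^{p-2}$, so the formula holds pointwise on the open set $\{u'\neq 0\}$ in the classical sense, and on $\{u'=0\}$ both sides vanish, so one should remark that the identity is understood in the weak (distributional) sense across the zero set of $u'$, consistent with the weak formulation of $p$-subharmonicity used in Theorem~\ref{thm:comparison_1}. I would also note that since $\phi$ may depend on $\theta$, the coefficient $\partial_r\phi(r,\theta)/\phi(r,\theta)$ genuinely retains its $\theta$-dependence, which is why the resulting operator applied to a radial $u$ need not produce a radial function — a subtlety that matters when this lemma is later fed into a comparison argument against the radial subsolution built from $z$ and $h$.
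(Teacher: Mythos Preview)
Your proof is correct and follows essentially the same route as the paper: both compute $\nabla u = u'(r)\,\partial_r$, apply the coordinate divergence formula $\operatorname{div}_g X = \phi^{-1}\partial_r(\phi\,X^r)$ to $X = |u'|^{p-2}u'\,\partial_r$, expand by the product rule, and factor out $|u'|^{p-2}$. Your additional remarks on the differentiation of $|u'|^{p-2}u'$ at points where $u'=0$ and on the residual $\theta$-dependence of the coefficient are sound elaborations but not departures from the paper's argument.
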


\begin{proof}
Since $u = u(r)$, we have $\nabla u = u'(r) \partial_r$, so $|\nabla u| = |u'(r)|$, and
\[
\Delta_p u = \operatorname{div}_g \left( |u'(r)|^{p-2} u'(r) \partial_r \right).
\]
We write $X = |u'(r)|^{p-2} u'(r) \partial_r$. Then, in polar coordinates, the divergence of a radial vector field $X^r \partial_r$ is
\[
\operatorname{div}_g X = \frac{1}{\phi(r,\theta)} \frac{\partial}{\partial r} \left( \phi(r,\theta) X^r \right).
\]
Therefore,
\[
\Delta_p u = \frac{1}{\phi(r,\theta)} \frac{d}{dr} \left( \phi(r,\theta) |u'(r)|^{p-2} u'(r) \right).
\]
Expanding using the product rule:
\[
\Delta_p u = \frac{1}{\phi(r,\theta)} \left[ \partial_r \phi(r,\theta) \cdot |u'(r)|^{p-2} u'(r) + \phi(r,\theta) (p-1) |u'(r)|^{p-2} u''(r) \right].
\]
Finally, factor out $|u'(r)|^{p-2}$:
\[
\Delta_p u = |u'(r)|^{p-2} \left[ (p-1) u''(r) + \frac{\partial_r \phi(r,\theta)}{\phi(r,\theta)} u'(r) \right].
\]
\end{proof}

We are ready to 
give a proof of Theorem \ref{thm:comparison_1}.
From the lemma, it is easy to prove that if 
for $r\geq a$
\[
K\left(\theta, r\right)\geq
-\frac{z''\left(r\right)}{z\left(r\right)},
\]
and
\[
\frac{z'\left(a\right)}{z\left(a\right)} \geq \max_{\theta\in \mathbb{S}^1} 
\frac{\phi_r\left(\theta,a\right)}{\phi\left(\theta,a\right)},
\]
then $h$ is a $p$-supersolution. Indeed, by the
Sturm comparison theorem (see \cite{Co})
\[
\frac{\phi_r\left(\theta,r\right)}{\phi\left(\theta,r\right)}\leq
\frac{z'\left(r\right)}{z\left(r\right)},\quad r\geq a,
\]
and hence
\[
0=(p-1) h''(r) + \frac{z'\left(r\right)\phi(r)}{z(r)} h'(r)
\geq (p-1) h''(r) + \frac{\partial_r \phi(r,\theta)}{\phi(r,\theta)} h'(r),
\]
which shows that
\[
\Delta_p h \leq 0.
\]
We are ready to give a proof of Theorem 3. Let 
$R_1>0$ and $\delta>0$. Consider the function
\[
v=\delta h + M\left(u,R_1\right).
\]
Clearly $\Delta_p v\leq 0$. On the other hand,
by the hypothesis, there is $R_2>R_1$, which depends 
on $\delta$, such that
\[
M\left(u,R_2\right)< v\left(R_2\right),
\]
and hence, by the comparison principle for the
$p$-Laplacian,
in the annulus $R_1\leq r\leq R_2$,
$u\leq v$. Pick any $R>R_1$; the previous estimates
imply, by taking $\delta>0$ small enough, that
the maximum of $u$ in $B_R$ is reached at
an interior point: therefore, by the
strong maximum principle for
$p$-subharmonic functions
(see Theorem 4.2 in \cite{GoffiPedicone2021}), $u$ must be constant.

\medskip
We apply Theorem \ref{thm:comparison_1} to show the following.
\begin{theorem}
Let $\mathbb{R}^2$ endowed with a complete metric $g$ of the
form (\ref{eq:metric_general_form}).
Let $p\geq 2$, and let 
 assume that outside a compact subset
 the Gaussian curvature $K$ of $g$ satisfies 
 either
 \begin{itemize}
 \item[(i)]
 \[
 K\geq -\frac{p-1}{r^2\log r},
 \]
 \item[(ii)] or
 \[
 K\geq -\frac{\left(p-1\right)\left(p-2\right)}{r^2}.
 \]
 \end{itemize}
 Then every $p$-subharmonic $u\in C^1\left(\mathbb{R}^2\right)$ satisfying
 \[
 \liminf_{r\rightarrow\infty}\frac{M\left(u,r\right)}{\log\log r}=0
 \]
 is constant.
\end{theorem}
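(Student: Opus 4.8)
The plan is to derive the theorem directly from Theorem \ref{thm:comparison_1}: in each case I would exhibit an explicit function $z$ such that $-z''(r)/z(r)$ lies below the prescribed curvature bound outside a compact set, and such that the associated threshold
\[
h(r)=\int^r z(s)^{-\frac{1}{p-1}}\,ds
\]
grows at least as fast as $\log\log r$. Granting this, the hypothesis $\liminf_{r\to\infty}M(u,r)/\log\log r=0$ forces $\liminf_{r\to\infty}M(u,r)/h(r)=0$ (since $\liminf_{r\to\infty}h(r)/\log\log r>0$ implies $h(r)\ge c\log\log r$ for large $r$), so Theorem \ref{thm:comparison_1} applies and $u$ is constant.

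Under hypothesis (ii) I would take $z(r)=r^{p-1}$ (so $z=r$ when $p=2$). Then $-z''/z=-\frac{(p-1)(p-2)}{r^2}\le K$ outside the compact set, exactly as Theorem \ref{thm:comparison_1} requires, while $z^{-1/(p-1)}=r^{-1}$, so $h(r)=\log r+\mathrm{const}$, which dominates $\log\log r$. Under hypothesis (i): when $p>2$ one has $-\frac{p-1}{r^2\log r}\ge -\frac{(p-1)(p-2)}{r^2}$ for $r$ large (because $1/\log r\le p-2$ eventually), so (i) implies (ii) outside a larger compact set and the previous choice works; when $p=2$, hypothesis (i) reads $K\ge -\frac{1}{r^2\log r}$, and here I would take $z(r)=r\log r$, for which $z''=1/r$, hence $-z''/z=-\frac{1}{r^2\log r}\le K$, and $z^{-1/(p-1)}=(r\log r)^{-1}$, so $h(r)=\log\log r+\mathrm{const}$. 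In all cases $\liminf_{r\to\infty}h(r)/\log\log r>0$, the hypothesis on $u$ transfers, and Theorem \ref{thm:comparison_1} yields constancy. The cases $p=2$ recover Milnor's theorem (with $z=r$) and the Cheng--Yau Liouville theorem (with $z=r\log r$).

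The step that takes the most thought is the reverse engineering of $z$: it must be chosen so that $-z''/z$ sits just below the given curvature threshold and yet $h$ is still at least of order $\log\log r$. The delicate instance is $p=2$ under hypothesis (i), where the term $\frac{1}{r^2\log r}$ forces $z=r\log r$ in place of $z=r$, and where the integral $\int^r (s\log s)^{-1}\,ds$ lands exactly at $\log\log r$; this is what pins down the growth rate appearing in the statement. A secondary point, already handled inside the proof of Theorem \ref{thm:comparison_1}, is the Sturm initial inequality $z'(a)/z(a)\ge \max_{\theta}\phi_r(\theta,a)/\phi(\theta,a)$; for the explicit $z$'s above it can be arranged by modifying $z$ on a bounded $r$-interval near $r=a$ so as to steepen it there, which alters $h$ only by a bounded factor and hence not its order of growth. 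Everything else is routine, since the analytic substance---the radial $p$-Laplacian formula, the $p$-Laplacian comparison principle, the Sturm comparison, and the strong maximum principle---is already carried by Theorem \ref{thm:comparison_1}.
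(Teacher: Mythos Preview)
Your argument is correct and complete. The route differs from the paper's: rather than case-splitting, the paper uses a single comparison function $z(r)=(r\log(r/R))^{p-1}$ for both (i) and (ii), checking that $-z''/z$ sits below each curvature threshold; since this $z$ vanishes at $r=R$ one has $z'/z\to\infty$ there, so the Sturm initial inequality $z'(a)/z(a)\ge\max_\theta\phi_r(\theta,a)/\phi(\theta,a)$ is automatic, and $h(r)=\int^r(s\log(s/R))^{-1}\,ds\sim\log\log r$ in every case. Your choice $z=r^{p-1}$ for (ii) is simpler and even yields the stronger threshold $h\sim\log r$, but forces you to patch $z$ near $r=a$ to secure the initial inequality---a legitimate fix, though less clean than the paper's built-in vanishing. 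Your reduction of (i) to (ii) when $p>2$ is valid and economical; the paper instead verifies (i) directly with the same $z$.

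One small slip: in your closing sentence the parenthetical labels are swapped. At $p=2$, condition (i) is Milnor's criterion $K\ge-1/(r^2\log r)$, handled by $z=r\log r$; condition (ii) is $K\ge0$, the Cheng--Yau hypothesis, handled by $z=r$.
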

\begin{proof}
Let $R>0$ be such that if $r\geq R$ (i) holds, and
 let $z\left(r\right)=\left(r\log\left(\frac{r}{R}\right)\right)^{p-1}$. Then
 \[
 -\frac{z''\left(r\right)}{z\left(r\right)}=
-(p-1) \left[ (p-2) \left( \frac{\log\left( \frac{r}{R} \right) + 1}{r \log\left( \frac{r}{R} \right)} \right)^2 + \frac{1}{r^2 \log\left( \frac{r}{R} \right)} \right]
\leq -\frac{p-1}{r^2\log r}.
 \]
 Also
 \[
 \frac{z'\left(r\right)}{z\left(r\right)}=
 (p - 1) \cdot \frac{\log(r/R) + 1}{r \log(r/R)},
 \]
 which goes to $\infty$ as $r\rightarrow R$. Hence,
 by Sturm's comparison theorem, for all $\theta\in \mathbb{S}^1$, 
 \[
 \frac{z'(r)}{z(r)}\geq \frac{\phi_r\left(\theta,r\right)}{\phi\left(\theta,r\right)}, 
 \]
 and hence
 \[
 h\left(r\right)=\int^r 
 \frac{1}{z\left(s\right)^{\frac{1}{p-1}}}
 \, ds \sim \log\log r
 \]
 is a $p$-subharmonic function. Thus, by Theorem
 \ref{thm:comparison_1} the result follows if (i) holds.

 \medskip
 If (ii) holds for $r\geq R$, just notice that
 \begin{eqnarray*}
 -\frac{z''(r)}{z(r)}&=&
-(p-1) \left[ (p-2) \left( \frac{\log\left( \frac{r}{R} \right) + 1}{r \log\left( \frac{r}{R} \right)} \right)^2 + \frac{1}{r^2 \log\left( \frac{r}{R} \right)} \right]\\
&\leq&
-(p-1)(p-2) \left( \frac{\log\left( \frac{r}{R} \right) + 1}{r \log\left( \frac{r}{R} \right)} \right)^2\\
&\leq&
-(p-1)(p-2) \left( \frac{\log\left( \frac{r}{R} \right)}{r \log\left( \frac{r}{R} \right)} \right)^2\\
&=& -\frac{(p-1)(p-2)}{r^2},
\end{eqnarray*}
and the proof is the same as in the case presented
above.
\end{proof}

\medskip
\noindent
{\bf Remark.} 
%The attentive reader perhaos noticed
%that in the previous Theorem, when $p>2$, condition
%(ii) implies condition (i). However, we have a reason
%to have presented our result explicitly taking
%into account these two cases: when $p=2$, case
%(i) reduces to Milnor's curvature condition for 
%harmonic functons, whereas condition (ii) reduces
%to the celebrated Cheng-Yau theorem.
When \(p > 2\), condition \emph{(ii)} implies condition \emph{(i)}, since
\[
\frac{(p - 1)(p - 2)}{r^2} > \frac{p - 1}{r^2 \log r}
\quad \text{for } r \gg 1.
\]
Nevertheless, we chose to state both conditions explicitly for conceptual clarity. When \(p = 2\), condition \emph{(i)} becomes exactly Milnor’s curvature criterion for harmonic functions, while condition \emph{(ii)} corresponds to the classical Cheng--Yau condition. Thus, the theorem simultaneously recovers these two foundational results as special cases of a unified nonlinear framework.

\medskip
Related to our results is the beautiful work of Holopainen in 
\cite{Holopainen1999} who provided a volumetric condition
for a complete manifold to be $p$-parabolic (that is, every bounded $p$-harmonic function is constant). Holopainen's criterion, via volume comparison theorems, can be translated into
global curvature criteria for $p$-parabolicity extending the celebrated theorem of Cheng and Yau.

\end{document}